\theoremstyle{plain}
    \newtheorem{thm}{Theorem}[section]
     \newtheorem{conjecture}[thm]{Conjecture}
    \newtheorem{lemma}[thm]{Lemma}
    \newtheorem{proposition}[thm]{Proposition}
    \newtheorem{question}[thm]{Question}
    \newtheorem{theorem}[thm]{Theorem}
\theoremstyle{definition}
    \newtheorem{definition}[thm]{Definition}
    \newtheorem*{notation*}{Notation and Terminology}
    \newtheorem{remark}[thm]{Remark}
\theoremstyle{remark}
\newcommand{\bbC}{\mathbb{C}}
\newcommand{\bbP}{\mathbb{P}}
\newcommand{\bbR}{\mathbb{R}}
\newcommand{\bQ}{\mathbb{Q}}
\newcommand{\bR}{\mathbb{R}}
\newcommand{\bZ}{\mathbb{Z}}
\newcommand{\NE}{\overline{\operatorname{NE}}}
\newcommand{\Pic}{\operatorname{Pic}}
\newcommand{\mstriangle}[1]{
\begin{tikzpicture}[x=0.3cm,y=0.3cm]
\draw (-0.4,-0.433) -- (1.4,-0.433);
\draw (-0.2,-0.7794) -- (0.7,0.7794);
\draw (1.2,-0.7794) -- (0.3,0.7794);
\end{tikzpicture}
}
\newcommand{\mssharp}[1]{
\begin{tikzpicture}[x=0.3cm,y=0.3cm]
\draw (-0.8,-0.5) -- (0.8,-0.5);
\draw (-0.8,0.5) -- (0.8,0.5);
\draw (-0.5,-0.8) -- (-0.5,0.8);
\draw (0.5,-0.8) -- (0.5,0.8);
\end{tikzpicture}
}
\newcommand{\Rmnum}[1]{\expandafter\@slowromancap\romannumeral #1@}
\begin{document}

\title[Bounded cohomology property]
{Bounded cohomology property on a smooth projective surface with Picard number two}
\author{Sichen Li}
\address{
School of Mathematics, East China University of Science and Technology, Shanghai 200237, P. R. China}
\email{\href{mailto:sichenli@ecust.edu.cn}{sichenli@ecust.edu.cn}}
\dedicatory{Dedicated to Professor Sheng-Li Tan on the occasion of his 60th birthday}
\thanks{The research is supported by Shanghai Sailing Program (No. 23YF1409300).}
\begin{abstract}
We say  a smooth projective surface $X$ satisfies the bounded cohomology property  if there exists a positive constant $c_X$ such that $h^1(\mathcal O_X(C))\le c_Xh^0(\mathcal O_X(C))$ for every prime divisor $C$ on $X$.
Let the closed Mori cone $\NE(X)=\bR_{\ge0}[C_1]+\bR_{\ge0}[C_2]$ such that $C_1$ and $C_2$ with $C_2^2<0$ are some curves on $X$.
If either (i) the Kodaira dimension $\kappa(X)\le1$ or (ii) $\kappa(X)=2$, the irregularity $q(X)=0$ and the Iitaka dimension $\kappa(X,C_1)=1$, then we prove that $X$ satisfies the bounded cohomology property.
\end{abstract}

\keywords{bounded negativity conjecture, bounded cohomology property, Picard number two}
\maketitle

\section{Introduction}
In this note we work over the field $\bbC$ of complex numbers.
By a (negative) curve  on a surface we will mean a reduced, irreducible curve (with negative self-intersection).
By a {\it (-k)-curve}, we mean a negative curve $C$ with $C^2=-k<0$.
A prime divisor $C$ on a surface $X$ is either a nef curve or a negative curve in which case that $h^0(X,\mathcal O_X(C))=1$.

The bounded negativity conjecture (BNC for short)  is one of the most intriguing problems in the theory of projective surfaces and can be formulated as follows.
\begin{conjecture}	
\cite[Conjecture 1.1]{B.etc.13}\label{BNC}
For a smooth projective surface $X$ there exists an integer $b(X)\ge0$ such that $C^2\ge-b(X)$ for every curve $C\subseteq X$.
\end{conjecture}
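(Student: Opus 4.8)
The plan is to reduce the self-intersection bound to a bound on the canonical degree, then to exhaust the Enriques--Kodaira classification, disposing of the cases in which the canonical or anticanonical class is sign-definite and isolating the genuinely open core.

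First I would invoke adjunction: every reduced irreducible curve $C\subseteq X$ satisfies $C^2=2p_a(C)-2-K_X\cdot C$ with $p_a(C)\ge 0$, whence $C^2\ge -2-K_X\cdot C$. Thus Conjecture~\ref{BNC} is equivalent to the existence of a uniform upper bound $K_X\cdot C\le M(X)$ over all curves $C$ on $X$; granting this one takes $b(X)=M(X)+2$. The entire problem is therefore a bound on the canonical degree.

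Second, I would clear the cases where the reduction is immediate. If $-K_X$ is $\mathbb{Q}$-effective---in particular if $K_X\equiv 0$, covering the minimal K3, abelian, Enriques and bielliptic surfaces, as well as del Pezzo and other surfaces with effective anticanonical class---then $K_X\cdot C\le 0$ for every curve $C$ off the finitely many components of a fixed anticanonical divisor, so $C^2\ge -2$ there, and the finitely many exceptional components have trivially bounded negativity; this settles Conjecture~\ref{BNC} for such $X$ (compare \cite{B.etc.13}). For the remaining surfaces I would pass to a minimal model $\pi\colon X\to X_{\min}$ and split the curves on $X$ into the finitely many $\pi$-exceptional ones and the strict transforms of curves downstairs. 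Here care is needed: under a blowup $(\widetilde C)^2=C^2-\sum_p m_p(C)^2$, so negativity can strictly increase and BNC is \emph{not} inherited from $X_{\min}$ for free; the genuine content is to bound the multiplicities $m_p(C)$, equivalently the canonical degrees, uniformly along the whole family of curves.

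The main obstacle is precisely this last bound on rational surfaces ($\kappa(X)=-\infty$, non-ruled minimal model) and on surfaces of general type ($\kappa(X)=2$). On such $X$ there may be infinitely many rational curves, for which adjunction reads $C^2=-2-K_X\cdot C$, so bounding $C^2$ from below is literally the same as bounding $K_X\cdot C$ from above; the Hodge index inequality $K_X^2\,C^2\le (K_X\cdot C)^2$ is vacuous once $C^2<0$ and supplies no such control. No general method is known to bound the canonical degree of rational or bounded-genus curves: for blowups of $\mathbb{P}^2$ this is entangled with boundedness of Seshadri constants and with the Harbourne--Hirschowitz/SHGH conjectures, and for general type with the possible existence of rational curves of unbounded anticanonical degree. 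It is the absence of such a uniform degree estimate that leaves Conjecture~\ref{BNC} open in full generality, and any honest attempt must either produce a new bound in these two ranges or restrict---as the present paper does, to $\rho(X)=2$ with hypotheses on $\kappa(X)$, $q(X)$ and $\kappa(X,C_1)$---to a setting where the Mori cone is spanned by two classes and the finitely many negative curves can be pinned down directly.
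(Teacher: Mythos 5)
The statement you were asked about is Conjecture~\ref{BNC}, the Bounded Negativity Conjecture, which the paper does not prove and cannot prove: it is quoted from \cite{B.etc.13} as an open problem, and the paper only establishes it for the restricted class of surfaces it studies (via Proposition~\ref{NE}: when $\NE(X)=\sum_i\bR_{\ge0}[C_i]$ with the $C_i$ curves, any negative curve $D$ satisfies $0>D^2=D\cdot\sum_i a_iC_i$, forcing $D=C_i$ for some $i$, so there are only finitely many negative curves and BNC holds trivially there). Your proposal correctly recognizes that the general conjecture is open and does not claim a proof; in that sense your conclusion agrees with the paper, and your closing remark about restricting to $\rho(X)=2$ with a finitely generated Mori cone is exactly the paper's route to the special case. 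Note, though, that the paper's mechanism in that special case is even more elementary than anything in your sketch: no adjunction or canonical-degree bound is needed, just the observation that a negative curve must be extremal in a cone spanned by finitely many curve classes.

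There is one genuine mathematical error in your reduction: the claim that Conjecture~\ref{BNC} is \emph{equivalent} to a uniform upper bound $K_X\cdot C\le M(X)$ over \emph{all} curves $C$ on $X$. Only one implication holds. Adjunction $C^2=2p_a(C)-2-K_X\cdot C$ with $p_a(C)\ge0$ indeed gives $C^2\ge-2-M(X)$ from such a bound, but the converse fails badly: on any surface with $\rho(X)=1$ and $K_X$ ample (a general type surface with Picard number one), every curve satisfies $C^2>0$, so BNC holds with $b(X)=0$, yet $K_X\cdot C$ is unbounded as $C$ ranges over irreducible members of $|nH|$. Even restricting to negative curves the equivalence fails, since $K_X\cdot C=2p_a(C)-2-C^2$ can grow with the genus while $C^2$ stays bounded; the bounded canonical degree statement is strictly stronger than BNC, and this distinction matters because your entire subsequent case analysis is framed as an attack on the (stronger) canonical-degree bound. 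The rest of your survey is sound: the $\bbQ$-effective $-K_X$ case is handled exactly as in \cite{B.etc.13}, your caution that negativity is not preserved under blowdown (so BNC does not descend for free from $X_{\min}$, because $(\widetilde{C})^2=C^2-\sum_p m_p(C)^2$) is correct, and your identification of rational surfaces and general type surfaces as the open core matches the state of the art.
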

\begin{definition}
We say that a smooth projective surface $X$ has
\begin{equation*}
b(X)>0	
\end{equation*}
if there is at least one negative curve on $X$.	
\end{definition}
Recall that  a smooth projective surface $X$ satisfies \text{\it bounded cohomology property} if there is a positive constant $c_X$ such that $h^1(\mathcal O_X(C))\le c_Xh^0(\mathcal O_X(C))$ for every curve $C$ on $X$ (cf. \cite[Conjecture 2.5.3]{B.etc.12}).
Notice that $X$ satisfies the BNC if $X$ satisfies the bounded cohomology property (cf. \cite[Proposition 14]{C.etc.17}).
On the one side, there exists  a surface of general type with a larger Picard number which does not satisfy the bounded cohomology property (cf. \cite[Corollary 3.1.2]{B.etc.12}).
On the other side, recent results in \cite{Li19,Li21} have helped to clarify which surfaces satisfy the bounded cohomology property.
In \cite{Li19}, we gave a classification of the smooth projective surface with $\rho(X)=2$ and two negative curves. 
This result motivated us to showed  in \cite{Li21} that every smooth projective surface with $\rho(X)=2$ and $b(X)>0$ satisfies the  bounded cohomology property if  either the Kodaira dimension $\kappa(X)=1$ or ~$X$~has two negative curves.
One may observe that the closed Mori cone $\NE(X)$ of these surfaces is generated by two curves.
This motivates the following question.
\begin{question}
(cf. \cite[Question 3.6]{Li21})
\label{Mainque}
Let $X$ be a smooth projective surface with the Picard number $\rho(X)=r$ and $C_i$ finitely many curves on $X$.
Suppose that $\NE(X)=\sum_{i=1}^r \bR_{\ge0}[C_i]$.
Does $X$ satisfy the bounded cohomology property?	
\end{question}
\begin{remark}
As a start to addressing Question \ref{Mainque},  we show in Proposition \ref{NE} that $X$ has finitely many negatively curves (hence satisfies the BNC) and there is a positive constant $m(X)$ such that $l_C\le m(X)$ for every curve $C$ with $C^2\ne0$ on $X$. 
Here, $l_C$ is defined in Definition \ref{defn1}.
\end{remark} 
Now we give our main result as follows.
\begin{theorem}\label{MainThm}
Let $X$ be a smooth projective surface with the Picard number $\rho(X)=2$ and $b(X)>0$.
Then $X$ satisfies the bounded cohomology property if either (i) $\kappa(X)\le1$ or (ii) $\kappa(X)=2$, the irregularity $q(X)=0$ and the Iitaka dimension $\kappa(X,C_1)=1$.
\end{theorem}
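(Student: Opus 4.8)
The plan is to verify the two hypotheses of Proposition \ref{keyprop}: a uniform bound $l_C\le m(X)$ for every curve $C$, and the estimate $|D^2|\le m(X)h^0(\mathcal O_X(D))$ for every curve $D$ with $D^2<0$, or with $D^2>0$ and $l_D>1$. By the Remark after Question \ref{Mainque} (that is, Proposition \ref{NE}), $X$ carries only finitely many negative curves, so the part of the second estimate with $D^2<0$ is immediate, and $l_C\le m(X)$ already holds whenever $C^2\ne0$; a curve with $C^2=0$ is numerically proportional to $C_1$, where both $K_X\cdot C$ and $h^0(\mathcal O_X(C))$ are governed by the proportionality factor, so $l_C$ stays bounded there as well. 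Thus the real content is $D^2\le m(X)h^0(\mathcal O_X(D))$ for curves $D$ with $D^2>0$ and $l_D>1$. Using Proposition \ref{reduced} I may assume $K_X=aC_1+bC_2$ with $a,b\ge0$ and restrict to $D=a_1C_1+a_2C_2$ with $a_1>a$ and $0<2a_2<b$. This normal form has two payoffs. First, since $a_1>a$, the class $K_X-D=(a-a_1)C_1+(b-a_2)C_2$ has a negative $C_1$-coordinate, hence lies outside $\NE(X)=\bR_{\ge0}[C_1]+\bR_{\ge0}[C_2]$ and is not effective; Serre duality gives $h^2(\mathcal O_X(D))=h^0(\mathcal O_X(K_X-D))=0$, so Riemann--Roch yields
\begin{equation*}
h^0(\mathcal O_X(D))\ \ge\ \chi(\mathcal O_X)+\tfrac12\bigl(D^2-K_X\cdot D\bigr).
\end{equation*}
Second, $a_2$ is confined to the bounded interval $(0,b/2)$.

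The difficulty is that in the regime $l_D>1$ one has $K_X\cdot D>D^2$, so the right-hand side above degenerates and can be negative; the Riemann--Roch bound alone therefore cannot control $D^2$ by $h^0$. The way I would overcome this in case (ii) is to use $\kappa(X,C_1)=1$ directly. This forces $C_1^2=0$ (otherwise $C_1$ would be big and $\kappa(X,C_1)=2$), whence $D^2=2a_1a_2(C_1\cdot C_2)+a_2^2C_2^2\le a_1\,b\,(C_1\cdot C_2)$ grows at most linearly in $a_1$, by the boundedness of $a_2$. On the other hand, $\kappa(X,C_1)=1$ means precisely that $h^0(\mathcal O_X(a_1C_1))$ grows linearly in $a_1$; with $q(X)=0$ the associated fibration has base $\bbP^1$ (and $\chi(\mathcal O_X)\ge1$), which makes this lower bound clean and valid for every $a_1$. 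Since $D-a_1C_1=a_2C_2\ge0$ induces an inclusion $\mathcal O_X(a_1C_1)\hookrightarrow\mathcal O_X(D)$, I obtain
\begin{equation*}
h^0(\mathcal O_X(D))\ \ge\ h^0(\mathcal O_X(a_1C_1))\ \gtrsim\ a_1\ \gtrsim\ D^2,
\end{equation*}
and the finitely many $D$ with small $a_1$ are harmless; this is the desired $D^2\le m(X)h^0(\mathcal O_X(D))$.

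Case (i) is easier, and here the decisive uniform fact is that $K_X$ is \emph{not} big, so $h^2(\mathcal O_X(D))=h^0(\mathcal O_X(K_X-D))=0$ for every $D$ with $D^2>0$ without even invoking the cone position. After the reduction, minimal surfaces with $\kappa(X)\in\{0,1\}$ have $K_X$ numerically trivial or numerically proportional to $C_1$, forcing $b=0$, so the range $0<2a_2<b$ is empty and nothing remains; and when $\kappa(X)=-\infty$ the surface is ruled with $-K_X$ big, so every curve $D$ with $D^2>0$ has $K_X\cdot D<0$, hence $l_D<1$ and the hard regime is vacuous. The surviving fibred situations again satisfy $C_1^2=0$ with $C_1$ the fibre class, and run through the linear comparison of the previous paragraph (the linear lower bound for $h^0(\mathcal O_X(a_1C_1))$ coming from $\kappa(X,C_1)=1$, now without needing $q(X)=0$ since Riemann--Roch on the base curve still gives linear growth). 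The already-settled instances, namely $\kappa(X)=1$ and the case of two negative curves, may be quoted from \cite{Li21}, and the leftover curves with $C^2=0$ are dispatched as in the first paragraph.

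I expect the genuine obstacle to be case (ii). There $K_X$ is big, so the clean vanishing of case (i) is lost and must be replaced by the cone-position argument enabled by Proposition \ref{reduced}; worse, in the range $l_D>1$ the Riemann--Roch estimate collapses, so positivity of $h^0(\mathcal O_X(D))$ has to come from somewhere else entirely. The only remedy I see is the independent, linear-in-$a_1$ lower bound for $h^0(\mathcal O_X(a_1C_1))$ supplied by $\kappa(X,C_1)=1$, balanced against the at-most-linear growth of $D^2$ forced by $C_1^2=0$ and $a_2<b/2$. Accordingly I anticipate the technical heart to be the geometric input underlying this balance: establishing $C_1^2=0$, the semiampleness of $C_1$ and the resulting fibration, and the identification of its base with $\bbP^1$ via $q(X)=0$, after which the comparison of the two linear quantities is routine bookkeeping.
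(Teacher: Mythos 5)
Your global strategy---verifying the two hypotheses of Proposition \ref{keyprop} via Proposition \ref{l_D} and the normal form of Proposition \ref{reduced}, then splitting by Kodaira dimension---is the paper's, and your case (ii) reproduces the paper's Lemma \ref{kappa=2} essentially verbatim: $\kappa(X,C_1)=1$ forces $C_1^2=0$, so $D^2\le a_1b(C_1\cdot C_2)$ by $0<2a_2<b$, while $h^0(\mathcal{O}_X(D))\ge h^0(\mathcal{O}_X(a_1C_1))$ grows linearly in $a_1$ by Lazarsfeld's estimate. Case (i), however, contains two genuine gaps.

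First, for $\kappa(X)=-\infty$ your key claim that a ruled surface has $-K_X$ big is false in general. On a geometrically ruled surface over a curve $B$ of genus $g$ with invariant $e>0$ one has $-K_X\equiv 2C_0+(e+2-2g)f$, and since the big cone is the interior of $\NE(X)=\bR_{\ge0}[C_0]+\bR_{\ge0}[f]$, the divisor $-K_X$ is big if and only if $e>2g-2$; so bigness fails whenever $g\ge2$ and $0<e\le 2g-2$, and such surfaces satisfy all hypotheses of the theorem. Moreover the ``hard regime'' is then genuinely nonempty: on $X=\bbP(\mathcal{O}_B\oplus\mathcal{O}_B(-p))$ with $g(B)=3$, $e=1$, the second section $C_0'\equiv C_0+f$ has $(C_0')^2=1>0$ and $K_X\cdot C_0'=2g-2-e=3$, hence $l_{C_0'}=3>1$. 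Your fallback also fails here: Proposition \ref{reduced} presupposes $K_X=aC_1+bC_2$ with $a,b\ge0$, i.e.\ $K_X$ pseudoeffective, which is false for $\kappa(X)=-\infty$, and without the resulting bound $a_2<b/2$ the comparison collapses (for $D\equiv a_2C_0+a_1f$ one has $D^2=a_2(2a_1-a_2e)$, which grows quadratically, not linearly, in $a_1$). The paper's Lemma \ref{ruled} avoids all this by a direct dichotomy: if $b\ge 2g-2-e$ then $(K_X-D)\cdot D\le0$ and Riemann--Roch plus Proposition \ref{iq1} give $h^1<(q+1)h^0$, while if $b<2g-2-e$ then $b\ge ae$ forces $a<(2g-2-e)e^{-1}$, so $D^2<2(2g-2-e)^2e^{-1}$ is uniformly bounded.

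Second, you never treat the non-minimal surfaces with $\kappa(X)=0$, namely the one-point blow-ups $\pi:X\to Y$ of a K3 or abelian surface $Y$ with $\rho(Y)=1$. There $K_X\equiv\pi^*K_Y+E\equiv E\equiv C_2$, so in the normal form $a=0$ and $b=1\ne0$: your assertion that $K_X$ is ``numerically trivial or proportional to $C_1$, forcing $b=0$'' covers only the minimal case, and the fibred fallback is unavailable because nothing in case (i) forces $\kappa(X,C_1)=1$ (on such a blow-up one can have $C_1^2=0$ with $|nC_1|$ not moving). This is precisely where the paper needs a separate argument (the second half of Lemma \ref{kappa=0}), showing via integrality of $D\cdot C_1$ and $D\cdot C_2$ for big curves that no curve survives the reduction $0<2a_2<1$. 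Until these two subcases are supplied, case (i) of the theorem is not proved.
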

A Mori dream surface is a normal $\bQ$-factorial surface $X$ with finitely generated divisor class group $\mathrm{Cl}(X)$ whose nef cone is generated by finitely many semiample classes, and $h^1(X, \mathcal O_X)=0$ (cf. \cite{HK00}).
Ro\'e asked us the following question.
\begin{question}
Does every Mori dream surface satisfy the bounded cohomology property?
\end{question}
\section{Study for $l_C$}
We first recall the definition of $l_C$ for curves $C$ on $X$ in \cite[Defintion 2.2]{Li21}.
\begin{definition}
\label{defn1}
Let $X$ be a smooth projective surface.
\begin{enumerate}
\item[(1)] For every $\bR$-divisor $D$ with $D^2\ne0$ on $X$, we define a value $l_D$ of $D$ as follows:
\begin{equation*}
                                                       l_D:=\frac{(K_X\cdot D)}{\max\bigg\{ 1, D^2\bigg\}}.
\end{equation*}
\item[(2)] For every $\bR$-divisor $D$ with $D^2=0$ on $X$, we define a value $l_D$ of $D$ as follows:
\begin{equation*}
                                   l_D:=\frac{(K_X\cdot D)}{\max\bigg\{1,h^0(\mathcal O_X(D))\bigg\}}	
\end{equation*}
\end{enumerate}
\end{definition}
\begin{remark}
 Ciliberto et al asked in \cite[Question 4]{C.etc.17} that whether there exists a positive constant $m(X)$ such that $l_D\le m(X)$ for every curve $D$ with $D^2>0$ on $X$.
When the closed Mori cone $\NE(X)$ is generated by finitely many curves, this question has an affirmative answer (cf. Proposition \ref{NE}).
Moreover, the question motivated us to give a numerical characterization of the  bounded cohomology property in \cite{Li21}.
\end{remark}
\begin{proposition}
\label{keyprop}
\cite[Proposition 2.3]{Li21}
Let $X$ be a smooth projective surface.
If there exists a positive constant $m(X)$ such that $l_C\le m(X)$ for every curve $C$ on $X$ and $|D^2|\le m(X)h^0(\mathcal O_X(D))$ for every curve $D$ with either $D^2<0$ or $l_D>1$ and $D^2>0$ on $X$, then $X$ satisfies the  bounded cohomology property.
\end{proposition}
\begin{remark}
We say a surface $X$ satisfies \emph{uniformly boundedness for $l_C$ and $C^2$} respectively if there exists a positive constant $m(X)$ such that $l_C\le m(X)$ and $|C^2|\le m(X)h^0(\mathcal O_X(C))$ respectively for every curve $C$ on $X$.
Proposition \ref{keyprop} established that the uniformly boundedness for $l_C$ and $C^2$ imply the  bounded cohomology property.
\end{remark}
The following is due to Serre duality.
\begin{proposition}\label{iq1}
Let $C$ be a curve on a smooth projective surface $X$.
Then
\begin{equation*} 
 h^2(\mathcal O_X(C))-\chi(\mathcal O_X)\le q(X)-1.	
\end{equation*}
Here, $q(X)$ is the irregularity of $X$.
\end{proposition}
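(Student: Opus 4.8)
The plan is to push everything through Serre duality and then reduce the claim to an elementary fact about effective divisors. First I would apply Serre duality on the smooth projective surface $X$, which identifies
\begin{equation*}
h^2(\mathcal{O}_X(C)) = h^0(\mathcal{O}_X(K_X - C)).
\end{equation*}
This replaces the second cohomology on the left by a dimension of global sections, which is much easier to bound.

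Next I would unwind $\chi(\mathcal{O}_X)$ via its definition together with Hodge symmetry. Since $\chi(\mathcal{O}_X) = h^0(\mathcal{O}_X) - h^1(\mathcal{O}_X) + h^2(\mathcal{O}_X) = 1 - q(X) + p_g(X)$, where $p_g(X) = h^0(\mathcal{O}_X(K_X))$ is the geometric genus, the target inequality $h^2(\mathcal{O}_X(C)) - \chi(\mathcal{O}_X) \le q(X) - 1$ rearranges to
\begin{equation*}
h^2(\mathcal{O}_X(C)) \le \chi(\mathcal{O}_X) + q(X) - 1 = p_g(X) = h^0(\mathcal{O}_X(K_X)).
\end{equation*}
Combining this with the first step, the whole proposition becomes equivalent to the single clean inequality $h^0(\mathcal{O}_X(K_X - C)) \le h^0(\mathcal{O}_X(K_X))$.

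Finally I would establish this last inequality by using that $C$, being a curve, is an effective divisor. Multiplication by the section $s_C \in H^0(\mathcal{O}_X(C))$ cutting out $C$ gives an injective sheaf map $\mathcal{O}_X(K_X - C) \hookrightarrow \mathcal{O}_X(K_X)$, hence an injection on global sections $H^0(\mathcal{O}_X(K_X - C)) \hookrightarrow H^0(\mathcal{O}_X(K_X))$, which yields the needed inequality on dimensions.

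I do not anticipate a genuine obstacle: the argument is essentially bookkeeping around Serre duality, and the only substantive input is the effectivity of $C$, which is guaranteed by hypothesis. The single point deserving a little care is the reduction step, namely tracking the sign conventions so that the $q(X)$ and $p_g(X)$ contributions cancel correctly; once the statement is rewritten as $h^0(K_X - C) \le h^0(K_X)$, the conclusion is immediate.
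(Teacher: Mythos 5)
Your proof is correct and is essentially the paper's own argument: the paper offers no written proof beyond the remark that the proposition ``is due to Serre duality,'' and the intended reasoning is exactly your chain $h^2(\mathcal O_X(C))=h^0(\mathcal O_X(K_X-C))\le h^0(\mathcal O_X(K_X))=p_g(X)=\chi(\mathcal O_X)+q(X)-1$, using the effectivity of $C$. Your write-up simply makes explicit what the paper leaves implicit.
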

Thanks to Proposition \ref{keyprop}, we can quickly prove the following result.
\begin{proposition}\label{rho(X)=1}
 Every smooth projective surface $X$ with the Picard number $\rho(X)=1$ satisfies the bounded cohomology property.
\end{proposition}
\begin{proof}
We may assume that $\kappa(X)\le0$ or $\kappa(X)=2$.
Note that every curve $C$ on $X$ is nef and big.
If $\kappa(X)\le0$, then $h^1(\mathcal O_X(C))=0$ follows from Kawamata-Viehweg vanishing theorem (cf. \cite[Theorem 2.64]{KM98}).
If $\kappa(X)=2$, then we may assume that $K_X=aH$ and $C=dH$ with $a,d\in\bR_{>0}$.
Now $(K_X-C)C=(a-d)dH^2$.
If $d\ge a$, then $(K_X-C)C\le0$.
Then by Riemann-Roch theorem and Proposition \ref{iq1} imply that
\begin{equation*}\begin{split}
            h^1(\mathcal O_X(C))&=h^0(\mathcal O_X(C))+h^2(\mathcal O_X(C))+\frac{(K_X-C)C}{2}-\chi(\mathcal O_X)\\ & <(q(X)+1)h^0(\mathcal O_X(C)).
\end{split}\end{equation*}
If $d<a$, then $C^2\le a^2H^2h^0(\mathcal O_X(C))$.
This ends the proof by Proposition \ref{keyprop}.
\end{proof}
\begin{proposition}
\label{C^2=0}
Let $C$ be a curve on a smooth projective surface $X$.
Suppose $C^2=0$.
Then there exists a constant $a_C$ such that $l_D\le a_C$ for every $n>0$ and  every curve $D\in |nC|$ (note that being a curve implies $D$ is reduced and irreducible).
\end{proposition}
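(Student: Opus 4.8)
The plan is to collapse the whole family $\{l_D : D\in|nC|,\ n>0\}$ to a single one-parameter quotient and then control its growth through the Iitaka fibration attached to $C$.

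First I would note that any $D\in|nC|$ satisfies $D\sim nC$, hence $D\equiv nC$ numerically; in particular $D^2=n^2C^2=0$, so the second branch of Definition~\ref{defn1} applies and
\[
l_D=\frac{K_X\cdot D}{\max\{1,h^0(\mathcal O_X(D))\}}=\frac{n\,(K_X\cdot C)}{\max\{1,h^0(\mathcal O_X(nC))\}}.
\]
If $K_X\cdot C\le0$, then $l_D\le0$ for every such $D$ and any positive constant works; so from now on I assume $K_X\cdot C>0$. The task reduces to bounding $n/\max\{1,h^0(\mathcal O_X(nC))\}$ as $n$ ranges over the integers for which $|nC|$ actually contains a reduced irreducible member.

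I would then split according to the Iitaka dimension $\kappa(X,C)$, which lies in $\{0,1\}$ since $C^2=0$. In the case $\kappa(X,C)=0$ one has $h^0(\mathcal O_X(nC))=1$ for every $n>0$, so $|nC|=\{nC\}$ consists of a single divisor; for $n\ge2$ that divisor $nC$ is non-reduced and hence not a curve, so the only contribution comes from $n=1$, yielding the fixed value $l_C$. Thus $a_C\ge l_C$ already suffices here.

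The substantive case is $\kappa(X,C)=1$. Since $C$ is irreducible with $C^2=0$ it is nef, and its numerical dimension equals $1=\kappa(X,C)$; by abundance on surfaces such a divisor is semiample, so $|mC|$ is base-point-free for some $m>0$ and, after Stein factorization, defines a fibration $f\colon X\to B$ onto a smooth curve that contracts $C$. Because $C$ is a fiber component with $C^2=0$, Zariski's lemma forces the fiber through $C$ to be supported on $C$ alone, say $F_0=mC$. Now take any reduced irreducible $D\sim nC$: from $D\cdot F_0=nm\,C^2=0$ the curve $D$ is $f$-vertical, and Zariski's lemma again shows that $D$ is the reduction of an entire fiber $F'=m'D$. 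All fibers are numerically equivalent, so $m'D\equiv F'\equiv F_0=mC$; intersecting with an ample class and using $D\equiv nC$ gives $m'n=m$, whence $n\le m$. Therefore $l_D\le n\,(K_X\cdot C)\le m\,(K_X\cdot C)$, and taking $a_C=\max\{l_C,\,m\,(K_X\cdot C)\}$ finishes the proof. The main obstacle is precisely this $\kappa=1$ case: one must genuinely produce the morphism $f$, so the semiampleness (abundance) input is essential to circumvent base-locus issues, and one must read off the inequality $n\le m$ from the \emph{numerical} equivalence of fibers rather than from a lower bound on $h^0(\mathcal O_X(nC))$, since multiple fibers and a base curve $B$ of positive genus break the linear equivalence of fibers and make a direct estimate of $h^0$ delicate.
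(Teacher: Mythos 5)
Your proposal is correct in substance, but it takes a genuinely different route from the paper. The paper's proof is soft: since $D\in|nC|$ gives $l_D=n(K_X\cdot C)/h^0(\mathcal O_X(nC))$, it invokes the asymptotic estimate of Lazarsfeld's Corollary 2.1.38 (for $\kappa(X,C)=1$ there is a constant $a>0$ with $h^0(\mathcal O_X(nC))\ge an$ for all large $n$), so the linear growth of the numerator is cancelled, and it absorbs the finitely many remaining small $n$ into $\max_{1\le k\le m}\{l_{kC}\}$. You instead build the fibration $f\colon X\to B$ attached to $C$ and use Zariski's lemma twice to show that a \emph{reduced irreducible} member of $|nC|$ can only exist for $n\le m$, after which the trivial bound $l_D\le n(K_X\cdot C)$ suffices. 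The two arguments trade different resources: the paper's needs neither irreducibility of $D$ nor a fibration (it bounds $l_D$ for every member of $|nC|$), while yours exploits the irreducibility hypothesis to obtain the stronger structural fact that irreducible members of $|nC|$ occur only for bounded $n$ (they are reductions of fibers), avoiding any growth estimate on $h^0$.

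Two points in your write-up need repair, though neither is fatal. First, ``by abundance on surfaces'' is not a valid justification: abundance concerns $K_X$, and the general principle you appear to invoke --- a nef divisor with $\kappa=\nu$ is semiample --- is false in general (for $\kappa=\nu=2$, Zariski's example gives a nef and big, non-semiample divisor on a blow-up of $\mathbb{P}^2$). The fact you actually need is classical and elementary on surfaces: for $n$ with $h^0(\mathcal O_X(nC))\ge2$, the fixed part of $|nC|$ is $aC$ for some $0\le a<n$ (because $nC$ is itself a member), so the mobile part is $|(n-a)C|$; it has self-intersection $0$ and no fixed component, hence is base point free, which produces your $m$. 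Second, $F_0=mC$ is slightly off: writing $mC=f^*(A)$ with $A$ effective on $B$, irreducibility of $C$ only gives $A=d\,b_0$ and $F_0=(m/d)C$ for some integer $d\ge1$; the numerical comparison then reads $m'nd=m$, so the conclusion $n\le m$ --- which is all you use --- still holds.
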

\begin{proof}
Take a curve $D\in |nC|$.
If $\kappa(X,C)=0$, then $D=C$.
Now we may assume that $\kappa(X,C)=1$.
By \cite[Corollary 2.1.38]{Lazarsfeld04}, there exists a constant $a>0$ and sufficiently large integer $m=m(C)>0$ such that
\begin{equation*}
                     an\le h^0(X,nC)
\end{equation*}
for all sufficiently large $n\ge m$.
So  we have
\begin{equation*}\begin{split}
	 l_D&=\frac{n(K_X\cdot C)}{h^0(X,nC)}
	       \\&\le \max\bigg\{\max_{1\le k\le  m}\big\{l_{kC}\big\}, \frac{(K_X\cdot C)}{a}\bigg\}:=a_C.
\end{split}\end{equation*}
This completes the proof of Proposition \ref{C^2=0}.
\end{proof}
The following result answers \cite[Question 4]{C.etc.17}.
\begin{proposition}\label{NE}
Let $X$ be a smooth projective surface and finitely many curves $C_i$ on $X$ with the Picard number $\rho(X)=r$.
Suppose that $\NE(X)=\sum_{i=1}^r\bR_{\ge0}[C_i]$.
Then $X$ has finitely many negative curves and there exists a positive constant $m(X)$ such that $l_D\le m(X)$ for every curve $D$ with $D^2\ne0$ on $X$.	
\end{proposition}
\begin{proof}
Let $D=\sum_{i=0}^ra_iC_i$ be a curve with $D^2\ne0$ on $X$ and each $a_i\ge0$.
If $D^2<0$, then $0>D^2=D\cdot (\sum_i a_iC_i)$ so $D=C_i$ for some $i$, and thereby conclude that there are only finitely many negative curves, and also $l_D\le \max_{1\le i\le \rho(X)}\{l_{C_i}\}$.
Now we may assume that $D^2>0$.
As a result, $D\cdot C_i\ge1$.
Then we have
\begin{equation*}\begin{split}
 l_D&=\frac{K_X\cdot D}{D^2}	
 \\&=\frac{\sum_{i=1}^ra_i(K_X\cdot C_i)}{\sum_{i=1}^ka_i(D\cdot C_i)}
 \\&\le \max_{1\le i\le r}\bigg\{\frac{|K_X\cdot C_i|}{D\cdot C_i}\bigg\}\\&\le \max_{1\le i\le r}\big\{|K_X\cdot C_i|\big\}:=C(X).
\end{split}\end{equation*}
In all, there exists a positive constant $m(X)=\max\big\{C(X), \max_{j\in J}\{l_{C_j}\}\big\}$ such that $l_D\le m(X)$ for every curve $D$ with $D^2\ne0$ on $X$.
\end{proof}
\section{The Proof of Theorem \ref{MainThm}}
\begin{lemma}\label{ruled}
Let $X$ be a geometrically ruled surface over a smooth curve $B$ of genus $g$, with invariant $e$.
Let $C\subset X$ be the unique section, and let $f$ be a fibre.
If $b(X)>0$, then $X$ satisfies the bounded cohomology property.
\end{lemma}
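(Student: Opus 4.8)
The plan is to verify the two hypotheses of Proposition~\ref{keyprop}. Let $\pi\colon X\to B$ be the ruling. In $\Num(X)=\bZ C\oplus\bZ f$ we have $C^2=-e$, $C\cdot f=1$, $f^2=0$, and $K_X\equiv-2C+(2g-2-e)f$, so $K_X\cdot C=e+2g-2$ and $K_X\cdot f=-2$. The only irreducible curves on $X$ are the fibres (with $f^2=0$) and the sections and multisections $D\equiv aC+bf$ with $a=D\cdot f\ge1$; since $b(X)>0$ and $C$ is the unique minimal section, the negative curve must be $C$ itself, so $e=-C^2>0$ and $\NE(X)=\bR_{\ge0}[C]+\bR_{\ge0}[f]$. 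For any multisection $D\ne C$ one has $D\cdot C=b-ae\ge0$, hence $D^2=2ab-a^2e\ge a^2e>0$; thus $C$ is the unique negative curve. Proposition~\ref{NE} then bounds $l_D$ for every curve with $D^2\ne0$, while for $D^2=0$ the relations $a\ge0$, $b-ae\ge0$ and $a(2b-ae)=0$ force $a=0$, i.e. $D=f$, for which $l_f=(K_X\cdot f)/\max\{1,h^0(\cO_X(f))\}\le0$. This settles the first hypothesis of Proposition~\ref{keyprop}.

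It then remains to bound $|D^2|$ by a multiple of $h^0(\cO_X(D))$ for every curve $D$ with $D^2<0$, or with $D^2>0$ and $l_D>1$. The negative case is trivial, since $C$ is the only negative curve and $h^0(\cO_X(C))\ge1$. So I would fix a curve $D\equiv aC+bf$ with $D^2>0$ and $l_D>1$; then $D\ne C$ and $a\ge1$. Writing $t:=b-\tfrac12ae$, we have $D^2=2at$ and $K_X\cdot D=a(e+2g-2)-2b$.

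The key observation is that the two conditions on $D$ force it into a bounded region. Since $D$ and $C$ are distinct irreducible curves, $D\cdot C=b-ae\ge0$, so $b\ge ae$ and therefore $t\ge\tfrac12ae$. On the other hand $l_D>1$ means $K_X\cdot D>D^2$, which rearranges to $(a+1)(2b-ae)<a(2g-2)$, that is $t<\tfrac{a(g-1)}{a+1}<g-1$. Combining $\tfrac12ae\le t<g-1$ yields $a<2(g-1)/e$, so $a$, $t$, and hence $D^2=2at$ are all bounded by a constant depending only on $e$ and $g$. (When $g\le1$ the inequalities $t\ge\tfrac12ae>0$ and $t<g-1$ are incompatible, so no such $D$ exists at all.) As $D^2$ is then bounded and $h^0(\cO_X(D))\ge1$, the inequality $|D^2|\le m(X)h^0(\cO_X(D))$ holds, and Proposition~\ref{keyprop} gives the conclusion.

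The step I expect to be decisive is this second hypothesis for curves with $D^2>0$ and $l_D>1$, because these are exactly the curves with $\chi(\cO_X(D))=(a+1)(t+1-g)<0$, where Riemann--Roch furnishes no lower bound on $h^0$. The mechanism that rescues the argument is that the geometric inequality $D\cdot C\ge0$ and the arithmetic inequality $l_D>1$ constrain $t$ from opposite sides and collapse the range of $a$ to a finite set, so that the only lower bound needed is $h^0(\cO_X(D))\ge1$. Were one to prefer a direct cohomological estimate, the inclusion $\cO_X(bf)\hookrightarrow\cO_X(aC+bf)$ together with $\pi_*\cO_X(bf)=\cO_B(bp)$ gives $h^0(\cO_X(D))\ge b+1-g$, which is linear in $a$ and comparable to $D^2<2a(g-1)$; the boundedness above, however, renders this superfluous.
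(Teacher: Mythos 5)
Your proposal is correct and runs on the same engine as the paper's proof: Hartshorne's numerical description of $\Num(X)$, a uniform bound on $l_D$ (you invoke Proposition~\ref{NE}; the paper redoes the same computation by hand), and, for the decisive case $D^2>0$, $l_D>1$, the same squeeze between a geometric lower bound and an arithmetic upper bound. Your inequalities $\tfrac12 ae\le t<g-1$ giving $a<2(g-1)/e$ and $D^2<4(g-1)^2/e$ are the exact analogue of the paper's $ae\le b<2g-2-e$ giving $a<(2g-2-e)e^{-1}$ and $D^2<2(2g-2-e)^2e^{-1}$. The one structural difference is that the paper also treats the complementary region $b\ge 2g-2-e$ by a direct Riemann--Roch estimate on $h^1$; as you implicitly observe, those curves satisfy $(K_X-D)\cdot D\le0$, i.e. $l_D\le1$, so under Proposition~\ref{keyprop} they need no $D^2$ bound at all, and your routing of everything through Proposition~\ref{keyprop} is the cleaner bookkeeping.

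One step in your first paragraph is circular as written. You conclude $e>0$ from ``the negative curve must be $C$,'' and then justify that $C$ is the only candidate by the computation $D^2=2ab-a^2e\ge a^2e>0$ for multisections $D\ne C$ --- but that last inequality already uses $e>0$. Pure intersection theory only gives: if $D\ne C$ is a negative multisection, then $b\ge ae$ and $2b<ae$ force $e<0$; it does not rule out a negative multisection on a ruled surface with $e<0$. Excluding that case (equivalently, proving $b(X)>0\Rightarrow e>0$) requires the geometric classification of irreducible curves on ruled surfaces, namely \cite[Propositions V.2.20 and 2.21]{Hartshorne77}, which is precisely the citation the paper makes at this point. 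With that reference inserted, your argument is complete; also note, as a cosmetic point, that $l_D>1$ is equivalent to $\chi(\mathcal O_X(D))<\chi(\mathcal O_X)=1-g$, not to $\chi(\mathcal O_X(D))<0$ as your closing remark states, though nothing in your proof depends on this.
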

\begin{proof}
We first show that $e>0$ if and only if $b(X)>0$.
By \cite[Proposition V.2.3 and 2.9]{Hartshorne77},
	\begin{equation*}
	\Pic 	X\cong \bZ C\oplus\pi^*\Pic B, C\cdot f=1, f^2=0, C^2=-e, K_X\equiv -2C+(2g-2-e)f.
	\end{equation*}
If $e>0$, then $C^2<0$ (hence $b(X)>0$).
Conversely, we may assume that $b(X)>0$.
Then there exists a negative curve $C'=aC+a'f$ on $X$.
As a result, $C'=C$ by \cite[Propositions V.2.20 and 2.21]{Hartshorne77}.

Now by \cite[Proposition V.2.20(a)]{Hartshorne77},  $a>0, a'\ge ae$.
As a result, every curve $D=aC+a'f(\ne C,f)$ has $D^2>0$, $(C\cdot D)\ge0$ and $(f\cdot D)>0$. Thus,
\begin{equation*}\begin{split}
                                         l_D&=\frac{(K_X\cdot D)}{D^2}\\&\le\frac{2(C\cdot D)+|2g-2-e|(f\cdot D)}{a(C\cdot D)+a'(f\cdot D)}\\&\le\mathrm{max}\bigg\{\frac{2}{a},\frac{|2g-2-e|}{a'}\bigg\}.
\end{split}\end{equation*}
Here, $a$ and $b$ are positive integers.
So there exists a positive constant $m=\max\{2,|2g-2-e|, l_C, l_f\}$ such that $l_F\le m$ for every curve $F$ on $X$.

If $a'\ge 2g-2-e$, then
\begin{equation}\label{eq3.2}
                        (K_X-D)D=-(2+a)(C\cdot D)+(2g-2-e-a')(f\cdot D)\le0.
\end{equation}
By Riemann-Roch theorem, (\ref{eq3.2}) and Proposition \ref{iq1} imply that
\begin{equation*}\begin{split}
            h^1(\mathcal O_X(D))&=h^0(\mathcal O_X(D))+h^2(\mathcal O_X(D))+\frac{(K_X-D)D}{2}-\chi(\mathcal O_X)\\&< (q(X)+1)h^0(\mathcal O_X(D)).
\end{split}\end{equation*}
If $a'<2g-2-e$, then $a<(2g-2-e)e^{-1}$ by $a'\ge ae$. As a result, $D^2<2(2g-2-e)^2e^{-1}$. Hence, by  Proposition \ref{keyprop}, $X$ satisfies  the bounded cohomology property.
\end{proof}
\begin{lemma}\label{kappa<0}
Let $X$ be a smooth projective surface with $\rho(X)=2$ and $b(X)>0$.
Then $X$ satisfies the bounded cohomology property if $\kappa(X)=-\infty$.
\end{lemma}
\begin{proof}
By the Enriques-Kodaira classification of relatively minimal surfaces (cf. \cite{Hartshorne77,KM98}) and $\kappa(X)=-\infty$,  $X$ is either a ruled surface (this case then follows from Lemma \ref{ruled}) or one point  blow up  of $\mathbb P^2$.
Now we assume that $X$ is one-point blow up of $\bbP^2$  with a exceptional curve $E$ and $\mathrm{Pic}(\mathbb P^2)=\bZ[H]$, where $H=\mathcal O_{\mathbb P^2}(1)$.
 Then  $K_X=\pi^{*}(-3H)+E$ and $C=\pi^*(dH)-mE$, where $m:=\mathrm{mult}_p(\pi_*C)$ and $C$ is a curve on $X$.
 Note that $d\ge m$ since $\pi_*C$ is a plane projective curve.
 Thus, every curve $C$ (not $E$) on $X$ has $C^2\ge0$ and then $C$ is nef.
 Since $-K_X$ is ample, $C-K_X$ is ample.
 Therefore, by Kodaira vanishing theorem, $h^1(\mathcal O_X(C))=0$. 
So $X$ satisfies the bounded cohomology property.
 \end{proof}
Now we begin to answer Question \ref{Mainque} for the case $\kappa(X)\ge0$.
\begin{proposition} \label{>0}
\cite[Proposition 3.1]{Li21}
Let $X$ be a smooth projective surface with $\rho(X)=2$. Then the following statements hold.
\begin{enumerate}
\item[(i)] $\NE(X)=\bbR_{\ge0}[f_1]+\bbR_{\ge0}[f_2]$, $f_1^2\le0, f_2^2\le0$ and $f_1\cdot f_2>0$. Here, $f_1, f_2$ are extremal rays.
\item[(ii)] If a curve $C$ has $C^2\le0$, then $C\equiv af_1$ or $C\equiv a'f_2$ for some $a,a'\in\mathbb R_{>0}$.
\item[(iii)] Suppose a divisor $D\equiv a_1f_1+a_2f_2$ with $a_1,a_2>0$ in (i).  Then $D$ is big. Moreover, if $D$ is a curve, then $D$ is nef and big and $D^2>0$.
\end{enumerate}
\end{proposition}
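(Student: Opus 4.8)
The plan is to work entirely inside the real N\'eron--Severi space $\Num(X)_{\bbR}$, which is two-dimensional since $\rho(X)=2$, and to exploit that by the Hodge index theorem the intersection form on $\Num(X)_{\bbR}$ has signature $(1,1)$. A closed, full-dimensional, strongly convex cone in a two-dimensional vector space is always generated by its two boundary (equivalently, extremal) rays; since $\NE(X)$ is closed and convex, spans $\Num(X)_{\bbR}$ (it contains the nonempty open ample cone), and contains no line (an ample class is strictly positive on $\NE(X)\setminus\{0\}$ by Kleiman's criterion), it has exactly this form. Calling its two extremal rays $f_1,f_2$ gives the generation statement in (i), and the whole proof then reduces to pinning down the signs of $f_1^2$, $f_2^2$ and $f_1\cdot f_2$.

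For these signs I would use two standard inputs: the positive cone $\mathcal P=\{x:x^2>0,\ x\cdot A>0\}$ (for a fixed ample $A$) is contained in the big cone, which is the interior of $\Eff(X)=\NE(X)$; and the extremal rays of $\NE(X)$ lie on its boundary. Since a nonzero effective class $f_i$ satisfies $f_i\cdot A>0$, the class $f_i$ cannot lie in $\mathcal P$, because $\mathcal P\subseteq\operatorname{int}\NE(X)$ while $f_i\in\partial\,\NE(X)$; therefore $f_i^2\le 0$, which proves the two self-intersection inequalities in (i). Writing an ample class as $A\equiv\alpha f_1+\beta f_2$ with $\alpha,\beta>0$ (possible since $A$ is interior to the cone), the inequality
\[
0<A^2=\alpha^2 f_1^2+2\alpha\beta\,(f_1\cdot f_2)+\beta^2 f_2^2\le 2\alpha\beta\,(f_1\cdot f_2)
\]
then forces $f_1\cdot f_2>0$, completing (i).

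For (ii), let $C$ be a curve with $C^2\le 0$ and write $[C]=a_1f_1+a_2f_2$ with $a_1,a_2\ge 0$, not both zero; I want one coefficient to vanish. If $C^2<0$, then $C$ is a negative irreducible curve, and I will invoke the classical fact that such a curve spans an extremal ray of $\NE(X)$; hence $[C]$ is proportional to $f_1$ or $f_2$. If instead $C^2=0$, then $C$ is nef (an irreducible curve meets every other irreducible curve nonnegatively, and $C^2=0$), so were both $a_1,a_2>0$ the class $[C]$ would be interior to $\NE(X)$, hence big; but a nef and big divisor has strictly positive self-intersection, contradicting $C^2=0$. Thus one coefficient is zero in either case, proving (ii).

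Part (iii) follows the same circle of ideas. If $D\equiv a_1 f_1+a_2 f_2$ with $a_1,a_2>0$, then $D$ lies in the interior of $\NE(X)=\Eff(X)$, so $D$ is big. When $D$ is moreover a curve, it must be nef: otherwise some irreducible curve would meet $D$ negatively, which by irreducibility forces $D^2<0$, making $D$ a negative curve and hence, by the extremality used in (ii), proportional to some $f_i$ --- impossible since $a_1,a_2>0$. Being nef and big, $D$ then satisfies $D^2=\vol(D)>0$. I expect the main obstacle to be the signs in part (i): everything downstream is bookkeeping once one knows that the positive cone sits in the interior of $\NE(X)$ and that negative curves span extremal rays, so the genuine content is packaging these convex-geometric facts correctly in the signature-$(1,1)$ plane.
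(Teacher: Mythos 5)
Your argument is correct, but note that there is nothing in this paper to compare it against: the proposition is imported verbatim from \cite[Proposition 3.1]{Li21} and no proof is reproduced here, so your write-up is effectively supplying a proof the present paper omits. Everything you do checks out: strong convexity (via Kleiman) plus full-dimensionality gives the two-ray description of $\NE(X)$ in the plane $\Num(X)_{\bbR}$; the inclusion of the positive cone in the interior of the pseudoeffective cone correctly forces $f_i^2\le 0$ on the boundary rays; and the estimate $0<A^2\le 2\alpha\beta\,(f_1\cdot f_2)$ cleanly yields $f_1\cdot f_2>0$. The one ingredient you invoke without proof --- that an irreducible curve $C$ with $C^2<0$ spans an extremal ray of $\NE(X)$ --- is indeed classical (it appears, e.g., in \cite{B.etc.13}), but it is worth observing that in the $\rho=2$ setting you need much less: it suffices to show $[C]$ lies on the \emph{boundary} of $\NE(X)$, since in a two-dimensional strongly convex cone the boundary is exactly $\bbR_{\ge0}f_1\cup\bbR_{\ge0}f_2$. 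And boundary membership is immediate from your own part (i) machinery: if $[C]$ were interior it would be big, so $n[C]\equiv A'+E$ with $A'$ ample and $E$ effective; writing $E=aC+E'$ with $C\not\subseteq\Supp E'$ and pairing $(n-a)[C]\equiv A'+E'$ first with $C$ and then with an ample class gives a contradiction with $C^2<0$ in either sign case for $n-a$. Using this shortcut makes your proof entirely self-contained, since the same boundary observation also covers the $D^2<0$ branch of your part (iii). Aside from that, the only stylistic gap is the compressed sentence in (iii) (``which by irreducibility forces $D^2<0$''): spell out that a curve class fails to be nef only against some irreducible curve $C'$, and $C'\ne D$ would give $D\cdot C'\ge0$, so $C'=D$.
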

\begin{proposition}\label{l_D}
If  $\NE(X)=\bR_{\ge0}[C_1]+\bR_{\ge0}[C_2]$ with $C_1,C_2$ are  some curves on $X$, then $X$ satisfies the uniformly boundedness for $l_C$.
\end{proposition}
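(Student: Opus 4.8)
The plan is to reduce to the single remaining case $C^2=0$ and then exploit the structure of the extremal rays. First I would dispose of every curve $C$ with $C^2\ne 0$: by Proposition \ref{NE} there is already a positive constant with $l_C$ bounded for all such $C$, using only the hypothesis $\NE(X)=\bR_{\ge0}[C_1]+\bR_{\ge0}[C_2]$. Thus it suffices to bound $l_C$ uniformly over all curves $C$ with $C^2=0$. By Proposition \ref{>0}(ii) any curve with non-positive self-intersection is numerically proportional to one of the two extremal generators $C_1,C_2$; since $C^2=0$ forces the relevant generator to have self-intersection $0$, we may assume $C\equiv aC_1$ with $a\in\bR_{>0}$ and $C_1^2=0$ (the ray $\bR_{\ge0}[C_2]$ is handled symmetrically, and is vacuous if $C_2^2<0$). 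Then $K_X\cdot C=a(K_X\cdot C_1)$, and since $\max\{1,h^0(\mathcal O_X(C))\}\ge 1$ we always have $l_C\le\max\{0,\,K_X\cdot C\}$. In particular, if $K_X\cdot C_1\le 0$ then $l_C\le 0$ and there is nothing to prove, so the entire problem reduces to bounding $a$ from above in the case $K_X\cdot C_1>0$.

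To bound $a$ I would split on the Iitaka dimension $\kappa(X,C_1)\in\{0,1\}$ (the value $2$ is excluded by $C_1^2=0$). If $\kappa(X,C_1)=1$, the Iitaka fibration attached to $C_1$ is a morphism $f\colon X\to B$ onto a smooth curve whose general fibre $F$ satisfies $F\equiv a_0C_1$ for some $a_0>0$. Since $C\cdot C_1=aC_1^2=0$, the curve $C$ is contracted by $f$, hence is a component of some fibre $D\equiv F$; therefore $[D]-[C]$ is the class of an effective divisor, and pairing $(a_0-a)C_1\equiv D-C$ with an ample class $H$ (using $C_1\cdot H>0$) yields $a\le a_0$. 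Consequently $l_C\le a_0\,|K_X\cdot C_1|$. If instead $\kappa(X,C_1)=0$, then $C_1$ is rigid and I would argue that there are only finitely many irreducible curves numerically proportional to $C_1$: any two distinct such curves $C\equiv aC_1$ and $C'\equiv a'C_1$ are disjoint, since $C\cdot C'=aa'C_1^2=0$, so an infinite supply of them would form a positive-dimensional family of pairwise disjoint curves sweeping out $X$ and hence induce a fibration of $X$, contradicting $\kappa(X,C_1)=0$. Finitely many curves give finitely many values of $l_C$, which are trivially bounded. Taking the maximum of the constant from Proposition \ref{NE}, the quantity $a_0|K_X\cdot C_1|$, and the finitely many exceptional values over both extremal rays produces the desired $m(X)$.

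The main obstacle is the subcase $\kappa(X,C_1)=0$: I must rule out an infinite family of irreducible curves numerically—but not linearly—proportional to $C_1$, a phenomenon that can only be fed by a positive-dimensional $\Pic^0(X)$, i.e.\ when $q(X)>0$. The delicate point is to upgrade \emph{infinitely many pairwise disjoint proportional curves} into an actual morphism to a curve (and hence to $\kappa(X,C_1)\ge 1$), using boundedness of the family of curves in a fixed numerical class together with the fact that a moving family of disjoint curves must sweep out all of $X$. Care is needed here because the members of such a family need not lie in a common complete linear system, so Proposition \ref{C^2=0} cannot be invoked directly; the argument must instead produce the fibration abstractly and then read off the bound on $a$ exactly as in the $\kappa(X,C_1)=1$ case.
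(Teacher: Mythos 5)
Your handling of curves with $C^2\ne0$ coincides with the paper's proof (both quote Proposition \ref{NE}); the divergence is in the case $C^2=0$. There the paper simply writes $D=aC_i$ with $a\in\bZ_{>0}$ and invokes Proposition \ref{C^2=0}; this silently upgrades the numerical equivalence $D\equiv aC_i$ furnished by Proposition \ref{>0}(ii) to membership $D\in|aC_i|$, which is exactly the point your proposal refuses to elide. Your $\kappa(X,C_1)=1$ argument is correct and self-contained: $C_1$ is nef with $C_1^2=0$, so the moving part of $|nC_1|$ for large $n$ is base-point free and yields an actual fibration $f\colon X\to B$ (this semiampleness needs to be said, but it is standard); any curve $C\equiv aC_1$ is vertical, and Zariski's lemma forces the fibre containing it to be a multiple $kC$, whence $a\le a_0$ and $l_C\le a_0|K_X\cdot C_1|$. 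This is a genuinely different mechanism from the paper's: you bound the numerator $K_X\cdot C$, while the paper's Proposition \ref{C^2=0} grows the denominator $h^0$; your route has the advantage of being insensitive to the difference between numerical and linear equivalence.

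The genuine gap is your subcase $\kappa(X,C_1)=0$. The step ``an infinite supply of pairwise disjoint curves $C\equiv aC_1$ would form a positive-dimensional family and hence induce a fibration'' is unjustified: a positive-dimensional algebraic family only arises from infinitely many curves lying in a \emph{bounded} family, and boundedness requires an a priori bound on the numerical classes $a_nC_1$ of the curves. No such bound is available --- bounding $a$ is precisely what the proposition asks for --- so the argument is circular. The dangerous scenario is infinitely many curves $D_n\equiv a_nC_1$ with $a_n\to\infty$ and only finitely many curves in each class; then no Hilbert- or Chow-scheme component contains infinitely many of them, nothing sweeps out $X$, and your closing remark that ``the argument must instead produce the fibration abstractly'' is a statement of intent rather than a proof. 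The gap can be closed when $q(X)=0$: there, numerically trivial classes are torsion, so a second irreducible curve $D\equiv aC_1$ (necessarily disjoint from $C_1$) gives, after clearing denominators and torsion, two disjoint effective members of a single linear system, hence a base-point-free pencil, hence $\kappa(X,C_1)\ge1$, a contradiction; thus $C_1$ is the only curve on its ray and finiteness is trivial. But for $q(X)>0$ a numerically trivial class need not be torsion, the pencil trick dies, and your proposal does not establish the claim in that case --- though in fairness, the paper's own two-line proof hides the same difficulty inside the unexplained identification ``$D=aC_i$''.
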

\begin{proof}
If $D^2\ne0$, then the proof follows from Proposition \ref{NE}.
If $D^2=0$, then $D=aC_i$ with $C_i^2=0$ by Proposition \ref{>0}.
We may assume that $a$ is a positive integer since $D$ is a curve.
So the proof follows from Proposition \ref{C^2=0}.
\end{proof}
\begin{proposition}\label{reduced}
Let $X$ be a smooth projective surface with $\rho(X)=2$.
To answer Question \ref{Mainque}, we may assume that the canonical divisor $K_X=aC_1+a'C_2$ with $a,a'\in\bR_{\ge0}$ and every curve $D=a_1C_1+a_2C_2$ with $a_1>a$ and $0<2a_2<a'$.	
\end{proposition} 
\begin{proof}
By Propositions \ref{keyprop} and \ref{l_D}, we only need to show that there exists a positive constant $m(X)$ such that $D^2\le m(X)h^0(\mathcal O_X(D))$ for every curve $D$ with $l_D>1$ and $D^2>0$ on $X$.
Now let $\NE(X)=\bR_{\ge0}[C_1]+\bR_{\ge0}[C_2]$ with $C_1$ and $C_2$ are some curves on $X$.
If $C_1^2<0$ and $C_2^2<0$, the proof follows from \cite[Lemma 3.4]{Li21}.

Now we may assume that $C_1^2=0$ and $C_2^2<0$.	
If either $\kappa(X)=-\infty$ or $\kappa(X)=1$, the proof follows from either   Lemma \ref{kappa<0}  or \cite[Lemma 3.2]{Li21} respectively.
Now we may assume that $\kappa(X)=0$ or $\kappa(X)=2$.
In fact, we only assume that $K_X=aC_1+a'C_2$ with $a,a'\ge0$ and $D=a_1C_1+a_2C_2$ with $a_1,a_2>0$.

If $a_1>a$ and $a_2>a'$, then 
\begin{equation*}
(K_X-D)D=(a-a_1)(D\cdot C_1)+(a'-a_2)(D\cdot C_2)<0.	
\end{equation*}
This and Proposition \ref{iq1} imply that 
\begin{equation*}\begin{split}
                                             h^1(\mathcal O_X(D))&=h^0(\mathcal O_X(D))+h^2(\mathcal O_X(D))+\frac{(K_X\cdot D)-D^2}{2}-\chi(\mathcal O_X)\\&<(q(X)+1)h^0(\mathcal O_X(D)).
\end{split}\end{equation*}
Note that $D^2>0$ implies that
\begin{equation*}
D^2=2a_1a_2(C_1\cdot C_2)-a_2^2(-C_2^2)>0.	
\end{equation*}
So 
\begin{equation}\label{eq2}
\frac{a_1}{a_2}>\frac{-C_2^2}{2(C_1\cdot C_2)}.
\end{equation}

If $a_1\le a$, then $a_2\le 2a(C_1\cdot C_2)(-C_2^2)^{-1}$ by (\ref{eq2}).
As a result,
\begin{equation*}
 D^2\le 4a^2(C_1\cdot C_2)^2(-C_2^2)^{-1}.	
\end{equation*}
Now we may assume that $a_1>a$ and $a_2<a'$.
If $2a_2\ge a'$, then
\begin{equation*}\begin{split}
(K_X-D)D&=((a-a_1)C_1+(a'-a_2)C_2)(a_1C_1+a_2C_2)
\\&=(aa_2+a_1(a'-2a_2))(C_1\cdot C_2)+a_2(a_2-a')(-C_2^2)
\\&<aa'(C_1\cdot C_2).
\end{split}\end{equation*}
This and Proposition \ref{iq1} imply that 
\begin{equation*}\begin{split}
                                             h^1(\mathcal O_X(D))&=h^0(\mathcal O_X(D))+h^2(\mathcal O_X(D))+\frac{(K_X\cdot D)-D^2}{2}-\chi(\mathcal O_X)\\&\le (q(X)+ab(C_1\cdot C_2))h^0(\mathcal O_X(D)).
\end{split}\end{equation*}

Therefore, to answer  Question \ref{Mainque}, we may assume that $K_X=aC_1+a'C_2$ with $a,a'\in\bR_{\ge0}$ and $D=a_1C_1+a_2C_2$ with $a_1>a$ and $0<2a_2<a'$.
\end{proof}
\begin{lemma}\label{kappa=0}
Let $X$ be a smooth projective surface with $\rho(X)=2$ and $b(X)>0$.
If $\kappa(X)=0$, then $X$ satisfies the bounded cohomology property.
\end{lemma}
\begin{proof}
If $K_X$ is nef, then $K_X\equiv 0$.
As a result, $l_C=0$ for every curve $C$ on $X$.
By the adjunction formula, $C^2\ge-2$.
By Riemann-Roch Theorem and Proposition \ref{iq1}, we have
\begin{equation*}\begin{split}
 2h^1(\mathcal O_X(C))&=2h^0(\mathcal O_X(C))+2h^2(\mathcal O_X(C))-2\chi(\mathcal O_X)-C^2
 \\&\le 2(q(X)+1)h^0(\mathcal O_X(C)).
\end{split}\end{equation*}
Therefore, $X$ satisfies the bounded cohomology property.

Now we assume that $K_X$ is not nef. 
Then by the minimal model program in dimension two (see eg. \cite[Flowchart 1-2-18]{Matsuki01}), $\pi: X\to Y$ is one-point blow-up of a K3 surface or an abelian surface $Y$ with $\rho(Y)=1$ and an exceptional curve $E$.
Then $K_X\equiv \pi^*K_Y+E$.
As a result, $K_X\equiv C_2$  since $\NE(X)=\bR_{\ge0}[C_1]+\bR_{\ge0}[C_2]$ and $C_2^2<0$.
So $a=0$ and $a'=1$.
By  Proposition \ref{reduced}, we may assume every curve $D$ has
\begin{equation}\label{eq}
D=a_1C_1+a_2C_2, a_1>0, 0<2a_2<1.	
\end{equation}
Note that $D$ is big by Proposition \ref{>0}.
This implies that $D\cdot C_1,D\cdot C_2\in\bZ_{>0}$ as $D$ is a curve.
Therefore, we may  assume that $a_1,a_2\in\bZ_{>0}$ after replacing $D$ by $cD$ with some $c\in\bZ_{>0}$. 
In fact, $c=(C_1\cdot C_2)^2+C_1^2>0$.
So $2a_2\ge2$ which leads a contradiction as $2a_2<1$ by Proposition \ref{reduced}.
\end{proof}
\begin{lemma}\label{kappa=2}
Let $X$ be a smooth projective surface with $\rho(X)=2$.
If  $\kappa(X)=2, q(X)=0$ and $\kappa(X,C_1)=1$, then  $X$ satisfies the bounded cohomology property.
\end{lemma}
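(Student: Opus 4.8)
The plan is to verify the two hypotheses of Proposition \ref{keyprop} for the curves that survive the reduction. Since $\kappa(X,C_1)=1$, the curve $C_1$ can be neither big (which would force $\kappa(X,C_1)=2$) nor negative (which would force $\kappa(X,C_1)=0$), so $C_1^2=0$; thus we are exactly in the case $C_1^2=0$, $C_2^2<0$ of Proposition \ref{reduced}. The uniform bound on $l_C$ is already provided by Proposition \ref{l_D}, so by Propositions \ref{keyprop} and \ref{reduced} it suffices to produce a constant $m(X)$ with $D^2\le m(X)h^0(\mathcal{O}_X(D))$ for every curve $D=a_1C_1+a_2C_2$ satisfying $a_1>a$, $0<2a_2<b$ and $D^2>0$. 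Because $\kappa(X)=2$, the big class $K_X=aC_1+bC_2$ lies in the interior of $\Eff(X)=\NE(X)$, so $a,b>0$; in particular $a_2$ is confined to the bounded interval $(0,b/2)$. Writing $t=C_1\cdot C_2>0$ and using $C_1^2=0$, we get
\begin{equation*}
D^2=2a_1a_2t-a_2^2(-C_2^2)<2a_1a_2t<a_1bt,
\end{equation*}
so $D^2$ grows at most linearly in $a_1$. Hence the whole problem reduces to the single estimate $h^0(\mathcal{O}_X(D))\ge \gamma a_1-\gamma'$ for constants $\gamma>0$, $\gamma'$ independent of $D$.

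For this estimate I would exploit $\kappa(X,C_1)=1$ together with $q(X)=0$. Since $\NS(X)$ is a lattice, $a_2$ takes only finitely many values in $(0,b/2)$; fix one of them and let $D_\ast$ be a curve of this type with the smallest admissible $a_1$. For any other admissible $D$ with the same $a_2$, the difference $D-D_\ast$ is an integral class numerically equal to a positive multiple $cP$ of the primitive generator $P$ of the ray $\bR_{\ge0}[C_1]$, and $c$ grows linearly with $a_1$. Because $q(X)=0$, numerical and linear equivalence agree up to torsion, and because $\kappa(X,P)=\kappa(X,C_1)=1$, \cite[Corollary 2.1.38]{Lazarsfeld04} gives $h^0(\mathcal{O}_X(cP))\ge \alpha c$ for all large $c$; in particular $D-D_\ast$ is effective. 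Multiplying by the section of $\mathcal{O}_X(D_\ast)$ that cuts out $D_\ast$ yields an injection
\begin{equation*}
H^0(\mathcal{O}_X(D-D_\ast))\hookrightarrow H^0(\mathcal{O}_X(D)),
\end{equation*}
whence $h^0(\mathcal{O}_X(D))\ge h^0(\mathcal{O}_X(D-D_\ast))\ge \alpha c\ge \gamma a_1-\gamma'$. Geometrically this is transparent: $\kappa(X,C_1)=1$ means $C_1$ induces a fibration $X\to B$ onto a curve, and as $a_1$ grows the linear system $|D|$ acquires a moving part consisting of $\sim a_1$ fibres, whose dimension grows linearly by Riemann--Roch on $B$.

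Combining the two displays, for all admissible $D$ with $a_1$ large we obtain $D^2<a_1bt\le \tfrac{bt}{\gamma}\,h^0(\mathcal{O}_X(D))+O(1)$, and the finitely many admissible numerical classes with small $a_1$ contribute only boundedly many values of the ratio $D^2/h^0(\mathcal{O}_X(D))$; absorbing these into the constant gives the required $m(X)$, and Proposition \ref{keyprop} then yields the bounded cohomology. The main obstacle is precisely the lower bound of the second paragraph: one must check rigorously that, for a fixed value of $a_2$ and large $a_1$, the class $D-D_\ast$ is a genuine effective multiple of $C_1$ with linearly growing $h^0$, which is where the hypotheses $\kappa(X,C_1)=1$ (linear growth) and $q(X)=0$ (so that this growth survives the passage from numerical to linear equivalence) are both essential. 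The remaining bookkeeping---finiteness of the possible $a_2$, the effect of the finitely many torsion twists, and the small-$a_1$ classes---is routine.
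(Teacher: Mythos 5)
Your proposal is correct and follows essentially the same route as the paper's proof: reduce to curves $D=a_1C_1+a_2C_2$ with $a_1>a$, $0<2a_2<b$ via Propositions \ref{reduced} and \ref{l_D}, bound $D^2\le b(C_1\cdot C_2)a_1$ exactly as in the paper, and use $q(X)=0$, $\kappa(X,C_1)=1$ and \cite[Corollary 2.1.38]{Lazarsfeld04} to get $h^0(\mathcal O_X(D))\ge \gamma a_1-\gamma'$, concluding by Proposition \ref{keyprop}. Your device of fixing $a_2$ (and the torsion class) and subtracting a minimal curve $D_\ast$ is simply a more careful justification of the paper's one-line claim $h^0(\mathcal O_X(D))\ge h^0(\mathcal O_X(a_1C_1))$, which the paper asserts directly from $q(X)=0$ while glossing over the integrality of $a_1,a_2$ and torsion issues that you rightly flag; the two arguments coincide in substance.
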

\begin{proof}
By Proposition \ref{reduced}, we may assume that $K_X=aC_1+a'C_2$ and every curve $D=a_1C_1+a_2C_2$ with $a_1>a$ and $0<2a_2<a'$.
On the one hand, since $q(X)=0$, we have
\begin{equation}\label{eq4}
h^0(\mathcal O_X(D))=h^0(\mathcal O_X(a_1C_1+a_2C_2))
	                 \ge h^0(\mathcal O_X(a_1C_1)).
\end{equation}
By \cite[Corollary 2.1.38]{Lazarsfeld04}, there exists a positive constant $c=(c_X^{-1}a')\cdot(C_1\cdot C_2)$ such that 
\begin{equation}\label{eq5}
	h^0(X,\mathcal O_X(a_1C_1))\ge (c_X^{-1}a')\cdot (C_1\cdot C_2)a_1.
\end{equation}
By (\ref{eq4}) and (\ref{eq5}), we have
\begin{equation}\label{eq6}
 h^0(\mathcal O_X(D))\ge  (c_X^{-1}a')\cdot(C_1\cdot C_2)a_1.
\end{equation}
On the other hand,
\begin{equation}\label{eq7}
D^2=2a_1a_2(C_1\cdot C_2)-a_2^2(-C_2^2)	
\le a'(C_1\cdot C_2)a_1,
\end{equation}
where using $0<2a_2<a'$ and $-C_2^2>0$.
Therefore, by (\ref{eq6}) and (\ref{eq7}), we have
\begin{equation*}
   D^2\le c_Xh^0(\mathcal O_X(D)).
\end{equation*}
Hence, $X$ satisfies the bounded cohomology property by Propositions \ref{keyprop} and \ref{l_D}.
 \end{proof}
\begin{proof}[Proof of  Theorem \ref{MainThm}]
It follows from Lemmas \ref{kappa<0}, \ref{kappa=0}, \ref{kappa=2} and \cite[Lemma 3.2]{Li21}.
\end{proof}
\section*{Acknowledgments}
The author would like to thank  Meng Chen and Sheng-Li Tan for their constant encouragement, Joaquim Ro\'e for his comments and the anonymous referee for several suggestions.

\end{document}